\titleformat*{\subsection}{\Large\bfseries}
\titleformat*{\subsubsection}{\large\bfseries}
\titleformat*{\paragraph}{\large\bfseries}
\titleformat*{\subparagraph}{\large\bfseries}
\renewcommand{\@seccntformat}[1]{\csname the#1\endcsname.}
\renewenvironment{abstract}{%
    \if@twocolumn
      \section*{\abstractname}%
    \else 
      \begin{center}%
        {\bfseries \Large\abstractname\vspace{\z@}}
      \end{center}%
      \quotation
    \fi}
    {\if@twocolumn\else\endquotation\fi}
\theoremstyle{plain}
\newtheorem{thm}{Theorem}[section]
\newtheorem{lem}[thm]{Lemma}
\theoremstyle{definition}
\newtheorem{defn}[thm]{Definition}
\newtheorem{rem}[thm]{Remark}
\newtheorem{fact}[thm]{Fact}
\newtheorem{cor}[thm]{Corollary}
\newtheorem{example}[thm]{Example}
\providecommand{\keywords}[1]{{\bf{Keywords:}} #1}
\providecommand{\subjectclass}[2]{\textbf{Mathematics subject classification 2020:} #1}
\title{{\bf Hales-Jewett type configurations in small sets}}
\author {          
Aninda Chakraborty \footnote{Department of Mathematics, Government General Degree College at Chapra, Chapra, Nadia, West Bengal, India.
          }
      \\ {\tt anindachakraborty2@gmail.com} 

           \and
Sayan Goswami \footnote{Corresponding author}
\footnote{Department of Mathematics, 
          University of Kalyani, 
          Kalyani-741235,
          Nadia, West Bengal, India
          } 
      \\ {\tt sayan92m@gmail.com}

}
\date{\vspace{-5ex}}
\begin{document}
\maketitle

\begin{abstract}
\noindent In a recent work, N. Hindman, D. Strauss and L. Zamboni have shown
that the Hales-Jewett theorem can be combined with a sufficiently
well behaved homomorphisms. Their work was completely algebraic in
nature, where they used the algebra of Stone-\v{C}ech compactification
of discrete semigroups. They proved the existence of those configurations
in piecewise syndetic sets, which are Ramsey theoretic rich sets.
In our work we will show those forms are still present in very small
but Ramsey theoretic sets, (like $J$-sets, $C$-sets) and our proof
is purely elementary in nature.\\

\noindent \subjectclass{05D10} \\

\noindent \keywords{Hales-Jewett theorem}
\end{abstract}

\section{Introduction}

Let $\omega=\mathbb{N}\cup\left\{ 0\right\} $, where $\mathbb{N}$
is the set of positive integers. Then $\omega$ is the first infinite
ordinal. For any set $X$, let $\mathcal{P}_{f}\left(X\right)$ be
the set of all nonempty finite subsets of  $X$.

Given a nonempty set $\mathbb{A}$ (or alphabet) we let $S_{0}$ be
the set of all finite words $w=a_{1}a_{2}\ldots a_{n}$ with $n\geq1$
and $a_{i}\in\mathbb{A}$. The quantity $n$ is called the length
of $w$ and denoted $\left|w\right|$. The set $S_{0}$ is naturally
a semigroup under the operation of concatenation of words. We will
denote the empty word by $\theta.$ For each $u\in S_{0}$ and $a\in\mathbb{A}$,
we let $\left|u\right|_{a}$ be the number of occurrences of $a$
in $u$. We will identify the elements of $\mathbb{A}$ with the length-one
words over $\mathbb{A}$.

Let $v$ (a variable) be a letter not belonging to $\mathbb{A}$.
By a variable word over $\mathbb{A}$ we mean a word $w$ over $\mathbb{A}\cup\left\{ v\right\} $
with $\left|w\right|_{v}\geq1$. We let $S_{1}$ be the set of variable
words over $\mathbb{A}$. If $w\in S_{1}$ and $a\in\mathbb{A}$,
then $w\left(a\right)\in S_{0}$ is the result of replacing each occurrence
of $v$ by $a$.

A finite coloring of a set $A$ is a function from $A$ to a finite
set $\left\{ 1,2,\ldots,n\right\} $. A subset $B$ of $A$ is monochromatic
if the function is constant on $B$. If $\mathbb{A}$ is any finite
nonempty set and $S$ is the free semigroup of all words over the
alphabet $\mathbb{A}$, then the Hales-Jewett Theorem states that
for any finite coloring of the $S$ there is a variable word over
$\mathbb{A}$ all of whose instances are the same color.
\begin{thm}
\label{HJ}\cite{key-1} Assume that $\mathbb{A}$ is finite. For
each finite coloring of $S_{0}$ there exists a variable word $w$
such that $\left\{ w\left(a\right):a\in\mathbb{A}\right\} $ is monochromatic.
\end{thm}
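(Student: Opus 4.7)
The plan is to prove Theorem \ref{HJ} by induction on $k = |\mathbb{A}|$, following Shelah's focused-cube-lemma approach (which, as a bonus, yields primitive-recursive bounds). For the base case $k = 1$ the alphabet contains only $a_1$, and the variable word $w = v$ satisfies $\{w(a_1)\} = \{a_1\}$, trivially monochromatic under any coloring.

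For the inductive step, assume the result for every alphabet of size $k-1$ and every finite number of colors. Fix $\mathbb{A} = \{a_1, \ldots, a_k\}$, $r \geq 1$, and a coloring $\chi \colon S_0 \to \{1, \ldots, r\}$. Restricting to words of a single large length $N$ identifies variable words of length $N$ with one variable slot with combinatorial lines in $\mathbb{A}^N$, so the goal becomes producing a monochromatic combinatorial line. The central tool is the \emph{focused cube lemma}: for each $m \geq 1$ there exists $N(m)$ such that in any $r$-coloring of $\mathbb{A}^{N(m)}$ one can find combinatorial lines $L_1, \ldots, L_m$ with parametrizations $\lambda_i \colon \mathbb{A} \to \mathbb{A}^{N(m)}$ and a common point $y \in \mathbb{A}^{N(m)}$ satisfying $\lambda_i(a_k) = y$ for every $i$, and with each set $\{\lambda_i(a_j) : 1 \leq j \leq k-1\}$ monochromatic.

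I would prove the cube lemma by a secondary induction on $m$. The case $m = 1$ follows from the inductive hypothesis on alphabet size $k-1$, applied after fixing a suitable suffix of $a_k$'s so that the residual problem is a coloring of words over $\{a_1, \ldots, a_{k-1}\}$. The step from $m$ to $m+1$ is a product construction: split the coordinates of $\mathbb{A}^{N(m)+M}$ into an $N(m)$-block and an $M$-block, re-read the given $r$-coloring as a coloring of $\mathbb{A}^M$ by the $r^{|\mathbb{A}|^{N(m)}}$ color-patterns appearing on the $N(m)$-block, and choose $M$ large enough for the inductive hypothesis on size $k-1$ to produce one new focused monochromatic line attached to the existing configuration. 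Applying the cube lemma with $m = r+1$, pigeonhole among the colors $c_1, \ldots, c_{r+1}$ of the sub-alphabet portions of $L_1, \ldots, L_{r+1}$ together with the color of $y$ yields either an index $i$ for which $L_i$ is already monochromatic over all of $\mathbb{A}$, or a pair $i < j$ with $c_i = c_j$; in the latter case the standard combining trick merges the variable slots of $L_i$ and $L_j$, using $\lambda_i(a_k) = \lambda_j(a_k) = y$, into a single variable word whose $k$ instances share that color.

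The main obstacle is the inductive step of the cube lemma: arranging the product coloring so that the inductive hypothesis for alphabet size $k-1$ simultaneously delivers a new monochromatic line \emph{and} preserves focusing at $y$ requires careful bookkeeping of which coordinates are absorbed into $y$ and which become variable slots. This is exactly the step that Shelah's refinement handles cleanly, and that otherwise forces the very rapidly growing bounds appearing in the original Hales--Jewett proof.
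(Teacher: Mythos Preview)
The paper does not prove Theorem~\ref{HJ} at all: it is the classical Hales--Jewett theorem, stated with a citation to \cite{key-1} and used as a black box. So there is no ``paper's own proof'' to compare against; you are supplying a proof where the authors simply quote the result.

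That said, your sketch has a genuine gap in the final pigeonhole step. With $m=r+1$ colour-focused lines you correctly get indices $i<j$ with $c_i=c_j$, but the ``standard combining trick'' you invoke does not do what you claim. If $L_i$ has variable set $V_i$ and $L_j$ has variable set $V_j$, the merged line $L$ with variable set $V_i\cup V_j$ has, for $l<k$, the point $L(a_l)$ equal to $a_l$ on \emph{both} $V_i$ and $V_j$; this point lies on neither $L_i$ nor $L_j$ (those have $a_k$ on the other block, since they agree with $y$ there), so its colour is completely unknown. Knowing $c_i=c_j$ tells you nothing about $L$.

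The usual colour-focusing argument avoids this by building the focused lines to have \emph{pairwise distinct} colours and taking $m=r$; then the colour of the focus $y$ must coincide with some $c_i$, and that $L_i$ is monochromatic over all of $\mathbb{A}$. Alternatively, Shelah's actual argument (which you invoke for the primitive-recursive bounds) is organised differently, around an ``insensitivity'' cube lemma rather than colour-focused lines, and does not use this pigeonhole-and-merge step at all. Either route can be made to work, but the hybrid you wrote down does not close.
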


Now, we need to recall some definitions from \cite{key-4} which are
useful in our work.
\begin{defn}
\cite[Definition 2]{key-4} Let $n\in\mathbb{N}$ and $v_{1},v_{2},\ldots,v_{n}$
be distinct variables which are not members of $\mathbb{A}$.

(a) An $n$-variable word over $\mathbb{A}$ is a word $w$ over $\mathbb{A}\cup\left\{ v_{1},v_{2},\ldots,v_{n}\right\} $
such that $\left|w\right|_{v_{i}}\geq1$ for each $i\in\left\{ 1,2,\ldots,n\right\} $.

(b) If $w$ is an $n$-variable word over $\mathbb{A}$ and $\vec{x}=\left(x_{1},x_{2},\ldots,x_{n}\right)\in\mathbb{A}^{n}$,
then $w\left(\vec{x}\right)$ is the result of replacing each occurrence
of $v_{i}$ in $w$ by $x_{i}$ for each $i\in\left\{ 1,2,\ldots,n\right\} $.

(c) If $w$ is an $n$-variable word over $\mathbb{A}$ and $u=a_{1}a_{2}\ldots a_{n}$
is a length $n$ word, then $w\left(u\right)$ is the result of replacing
each occurrence of $v_{i}$ in $w$ by $a_{i}$ for each $i\in\left\{ 1,2,\ldots,n\right\} $. 

(d) $S_{n}$ is the set of $n$-variable words over $\mathbb{A}$.
\end{defn}

\begin{defn}
Let $S,T$ be two semigroups (or partial semigroups) and let $\nu:T\rightarrow S$
be a homomorphism. Then $\nu$ is called $S$-preserving if 
\[
\nu\left(uw\right)=u\nu\left(w\right)\text{ and }\nu\left(wu\right)=\nu\left(w\right)u
\]
 for every $u\in S$ and every $w\in T$.
\end{defn}

As an example, let $\mathbb{A}$ be any nonempty set and $n\in\mathbb{N}$.
Then for any $\vec{a}\in\mathbb{A}^{n}$, the map $h_{\vec{a}}:S_{n}\rightarrow S_{0}$
defined by $h_{\vec{a}}\left(w\right)=w\left(\vec{a}\right)$ is an
$S_{0}$-preserving homomorphism.
\begin{defn}
Let $S,T\text{ and }R$ be semigroups (or partial semigroups) such
that $S\cup T$ is a semigroup (or partial semigroup) and $T$ is
an ideal of $S\cup T$. Then a homomorphism $\tau:T\rightarrow R$
is said to be $S$-independent if, for every $w\in T$ and every $u\in S$,
\[
\tau\left(uw\right)=\tau\left(w\right)=\tau\left(wu\right).
\]
\end{defn}

As an example, let $T$ be a semigroup with identity $e$. Then for
any $n\geq1$, a homomorphism $\tau:S_{n}\cup S_{0}\rightarrow T$
is $S_{0}$-independent if $\tau\left[S_{0}\right]=\left\{ e\right\} $.

The following is a version of a multivariable extension of the Hales-Jewett
Theorem:
\begin{thm}
\label{MHJ}Assume that $\mathbb{A}$ is finite. Let $S_{0}$ be finitely
colored and let $n\in\mathbb{N}$. There exists $w\in S_{n}$ such
that $\left\{ w\left(\vec{x}\right):\vec{x}\in\mathbb{A}^{n}\right\} $
is monochromatic.
\end{thm}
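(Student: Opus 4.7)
The plan is to deduce this multi-variable form directly from the one-variable Hales--Jewett theorem (Theorem \ref{HJ}) by enlarging the alphabet from $\mathbb{A}$ to $\mathbb{B}:=\mathbb{A}^{n}$. The key observation is that each letter $\vec{b}=(b_{1},\ldots,b_{n})\in\mathbb{B}$ can be read as the length-$n$ word $b_{1}b_{2}\cdots b_{n}$ over $\mathbb{A}$. Let $T_{0}$ denote the set of finite words over $\mathbb{B}$, and let $\psi:T_{0}\to S_{0}$ be the concatenation-decoding map $\psi(\vec{b}_{1}\vec{b}_{2}\cdots\vec{b}_{L})=b_{1,1}\cdots b_{1,n}\,b_{2,1}\cdots b_{L,n}$, which is clearly a semigroup homomorphism.

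Given the prescribed finite coloring $c$ of $S_{0}$, pull it back via $\psi$ to a finite coloring $c\circ\psi$ of $T_{0}$. Since $\mathbb{B}$ is finite, Theorem \ref{HJ} applied to $T_{0}$ (with alphabet $\mathbb{B}$) yields a variable word $W$ over $\mathbb{B}$ such that $\{W(\vec{a}):\vec{a}\in\mathbb{B}\}$ is monochromatic under $c\circ\psi$. Write $W=T_{1}\,v\,T_{2}\,v\cdots v\,T_{k+1}$, where each $T_{i}$ is a (possibly empty) word over $\mathbb{B}$ and the variable $v$ occurs at least once.

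Now decode $W$ into an $n$-variable word $w$ over $\mathbb{A}$ by (i) replacing every letter of each $T_{i}$ by its $\psi$-expansion and (ii) replacing each occurrence of $v$ in $W$ by the length-$n$ string $v_{1}v_{2}\cdots v_{n}$ of fresh distinct variables. Because $|W|_{v}\geq1$, each $v_{i}$ appears at least once in $w$, so $w\in S_{n}$. A direct check---using that $\psi$ is a homomorphism---gives
\[
\psi\bigl(W(\vec{a})\bigr)=\psi(T_{1})\,a_{1}\cdots a_{n}\,\psi(T_{2})\,a_{1}\cdots a_{n}\cdots\psi(T_{k+1})=w(\vec{a})
\]
for every $\vec{a}=(a_{1},\ldots,a_{n})\in\mathbb{A}^{n}$, so the $c$-color of $w(\vec{a})$ equals the constant value of $(c\circ\psi)(W(\vec{a}))$ and does not depend on $\vec{a}$.

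The main point requiring care is the two-level bookkeeping: making sure that the substitution $v\mapsto\vec{a}$ in the $\mathbb{B}$-word $W$ is intertwined correctly with the substitution $v_{i}\mapsto a_{i}$ in the $\mathbb{A}$-word $w$ via $\psi$, and that each of the decoded variables $v_{1},\ldots,v_{n}$ really appears in $w$. An alternative route is induction on $n$, peeling off one variable at a time using Theorem \ref{HJ}; but this requires a slightly stronger inductive hypothesis and seems more cumbersome than the one-shot alphabet-extension outlined above.
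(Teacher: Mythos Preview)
Your argument is correct and is exactly the approach the paper has in mind: the paper does not give a proof of Theorem~\ref{MHJ} at all, only the one-line Remark that ``the multivariable version of the Hales--Jewett theorem follows from the Hales--Jewett theorem itself.'' Your alphabet-enlargement $\mathbb{B}=\mathbb{A}^{n}$ together with the decoding $v\mapsto v_{1}\cdots v_{n}$ is the standard way to make that remark precise, and the verification that $\psi(W(\vec{a}))=w(\vec{a})$ is clean.
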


\begin{rem}
The multivariable version of the Hales-Jewett theorem follows from
the Hales-Jewett theorem itself.
\end{rem}

We need to use some elementary structure of adequate partial semigroups. 
\begin{defn}
A partial semigroup is defined as a pair $\left(G,\ast\right)$ where
$\ast$ is an operation defined on a subset $X$ of $G\times G$ and
satisfies the statement that for all $x,y,z$ in $G$, $\left(x\ast y\right)\ast z=x\ast\left(y\ast z\right)$
in the sense that if either side is defined, so is the other and they
are equal.
\end{defn}

If $\left(G,\ast\right)$ is a partial semigroup, we will denote it
by $G$, when the operation $\ast$ is clear from the context. Now,
we give an example which will be useful in our work.
\begin{example}
Let us consider any sequence $\left\langle x_{n}\right\rangle _{n=1}^{\infty}$
in $\omega$ and let 
\[
G=\text{FS}\left(\left\langle x_{n}\right\rangle _{n=1}^{\infty}\right)=\left\{ \sum_{j\in H}x_{j}:H\in\mathcal{P}_{f}\left(\mathbb{N}\right)\right\} 
\]
 and 
\[
X=\left\{ \left(\sum_{j\in H_{1}}x_{j},\sum_{j\in H_{2}}x_{j}\right):H_{1}\cap H_{2}=\emptyset\right\} .
\]
 Define $\ast:X\rightarrow G$ by 
\[
\left(\sum_{j\in H_{1}}x_{j},\sum_{j\in H_{2}}x_{j}\right)\longrightarrow\sum_{j\in H_{1}}x_{j}+\sum_{j\in H_{2}}x_{j}.
\]
 It is easy to check that $G$ is a commutative partial semigroup.
One can similarly check the same for 
\[
G=\text{FP}\left(\left\langle x_{n}\right\rangle _{n=1}^{\infty}\right)=\left\{ \prod_{j\in H}x_{j}:H\in\mathcal{P}_{f}\left(\mathbb{N}\right)\right\} .
\]
\end{example}

\begin{defn}
\cite[Definition 2.1]{key-2} Let $\left(G,*\right)$ be a partial
semigroup. 

(a) For $g\in G$, $\varphi\left(g\right)=\left\{ h\in G:g\ast h\text{ is defined}\right\} $.

(b) For $H\in\mathcal{P}_{f}\left(G\right)$, $\sigma\left(H\right)=\bigcap_{h\in H}\varphi\left(h\right)$.

(c) For $g\in G$ and $A\subseteq G$, $g^{-1}A=\left\{ h\in\varphi\left(g\right):g*h\in A\right\} $.

(d) $\left(G,\ast\right)$ is adequate if and only if $\sigma\left(H\right)\neq\emptyset$
for all $H\in\mathcal{P}_{f}\left(G\right)$.
\end{defn}

The adequate property is very interesting and very useful to us. As
we will work with the sets of the form $\text{FS}\left(\left\langle x_{n}\right\rangle _{n=1}^{\infty}\right),$
and it is an adequate partial semigroup, we do not need general partial
semigroup. Throughout this paper, we have used elementary combinatorics
to characterize the Hales-Jewett type theorems on small sets, like
$J$-sets, $C$-sets etc., which can only be defined in a partial
semigroup if the partial semigroup is adequate. These type of sets
are called ``small'' as they need not be piecewise syndetic or need
not have positive density. For details the readers can see \cite{key-5}.
For a semigroup $S$, let $^{\mathbb{N}}S$ be the set of all sequences
in $S$ and let,
\[
\mathcal{J}_{m}=\left\{ t=\left(t_{1},t_{2},\ldots,t_{m}\right)\in\mathbb{N}^{m}:t_{1}<t_{2}<...<t_{m}\right\} .
\]

\begin{defn}
\label{J}Let $\left(S,\cdot\right)$ be a semigroup and $A\subseteq S$.
Then $A$ is a $J$-set if and only if for each $F\in\mathcal{P}_{f}\left(^{\mathbb{N}}S\right)$
there exist $m\in\mathbb{N}$, $a=\left(a_{1},a_{2},\ldots,a_{m+1}\right)\in S^{m+1}$
and $t=\left(t_{1},t_{2},\ldots,t_{m}\right)\in\mathcal{J}_{m}$ such
that for each $f\in F$, 
\[
\left(\prod_{j=1}^{m}a_{j}\cdot f\left(t_{j}\right)\right)\cdot a_{m+1}\in A.
\]
\end{defn}

A $C$-set is a set that satisfies the conclusion of the central sets
theorem. $C$-sets can also be characterized as members of idempotents
in $J\left(S\right)$, the closed ideal containing elements of $\beta S$
(the Stone-\v{C}ech compactification of a semigroup $S$), whose
members are $J$-sets, for more details, readers can see \cite{key-3}.
Using this characterization of $C$-sets in terms of idempotents one
can prove the combinatorial characterization, which will be needed
for our purpose, stated below.
\begin{thm}
\cite[Theorem 14.27, p-358]{key-3}\label{C} Let $\left(S,\cdot\right)$
be an countable infinite semigroup and let $A\subseteq S$. Then the
followings are equivalent.
\begin{enumerate}
\item $A$ is a $C$-set.
\item There is a decreasing sequence $\left\langle D_{n}\right\rangle _{n=1}^{\infty}$
of subsets of $A$ such that 

(i) for each $n\in\mathbb{N}$ and each $x\in D_{n}$, there exists
$m\in\mathbb{N}$ with 
\[
D_{m}\subseteq x^{-1}D_{n}
\]
 and 

(ii) for each $n\in\mathbb{N}$, $D_{n}$ is a $J$ -set.
\end{enumerate}
\end{thm}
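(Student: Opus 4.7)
The plan is to use the idempotent characterization of $C$-sets mentioned just before the theorem: $A$ is a $C$-set if and only if there exists an idempotent $p \in J(S)$ with $A \in p$. I would prove the two implications separately.

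For $(1)\Rightarrow(2)$, I would work with the idempotent $p \in J(S)$ containing $A$ and use the standard star-construction. For any $B \in p$, let $B^{\star}=\{x\in B : x^{-1}B \in p\}$; because $p$ is an idempotent, $B^{\star} \in p$ and for every $x \in B^{\star}$ we have $x^{-1}B^{\star} \in p$. Recursively set $D_{1}=A^{\star}$ and $D_{n+1}=(D_{n})^{\star}$. Each $D_{n}\in p$, so $D_{n}$ is automatically a $J$-set because $p \in J(S)$ means every member of $p$ is a $J$-set. The shift condition (i) follows from the star construction: given $x \in D_{n}$, $x^{-1}D_{n}\in p$, and by passing to a large enough $m$ one can arrange $D_{m} \subseteq x^{-1}D_{n}$ (decreasingness can be enforced by replacing $D_{n+1}$ with $D_{n+1}\cap D_{n}$ at each stage, which stays in $p$).

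For $(2)\Rightarrow(1)$, the strategy is to locate an idempotent of $J(S)$ containing $A$. Set
\[
E \;=\; \bigcap_{n=1}^{\infty}\overline{D_{n}}\;\cap\; J(S) \;\subseteq\; \beta S.
\]
Condition (ii) guarantees $\overline{D_{n}}\cap J(S)\neq\emptyset$ for each $n$, and since the $D_{n}$ are decreasing this gives a nested family of nonempty closed subsets of the compact space $J(S)$, so $E\neq\emptyset$. The main content is to show that $E$ is a subsemigroup. Given $q,r\in E$ and $n\in\mathbb{N}$, I need $D_{n}\in qr$, i.e., $\{x\in S : x^{-1}D_{n}\in r\}\in q$. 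Using (i), for every $x\in D_{n}$ there exists $m=m(x,n)$ with $D_{m}\subseteq x^{-1}D_{n}$; since $r\in\overline{D_{m}}$, this forces $x^{-1}D_{n}\in r$, so the set on which $x^{-1}D_{n}\in r$ contains $D_{n}$, which lies in $q$. Hence $D_{n}\in qr$, and $E$ is closed under multiplication. Since $J(S)$ is a two-sided ideal, $E$ is even a left-ideal inside itself and thus a compact right topological semigroup, so by Ellis's theorem it contains an idempotent $p$. Then $A\supseteq D_{1}\in p$, so $p$ is an idempotent of $J(S)$ with $A\in p$, showing $A$ is a $C$-set.

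The main obstacle is the subsemigroup argument in the second direction: one has to see that condition (i), which is stated pointwise for $x\in D_{n}$, globalizes correctly to force $D_{n}\in qr$ for all $q,r$ in the intersection. The rest is essentially the standard recipe for turning combinatorial largeness conditions into idempotent-ultrafilter statements, and the decreasingness of $\langle D_{n}\rangle$ together with the two-sided ideal property of $J(S)$ then hands over Ellis's theorem without further work.
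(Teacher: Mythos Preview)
The paper does not supply a proof of this statement; it is quoted verbatim from Hindman--Strauss \cite[Theorem~14.27]{key-3} and used only as a black box in the corollary that follows. So there is no ``paper's proof'' to compare against.

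That said, your sketch of $(2)\Rightarrow(1)$ is essentially the standard argument and is fine (the sentence about $E$ being ``a left ideal inside itself'' is superfluous; once $E$ is a nonempty compact subsemigroup of $\beta S$, Ellis applies directly). The direction $(1)\Rightarrow(2)$, however, has a real gap. From $x\in D_{n}$ and the star property you correctly get $x^{-1}D_{n}\in p$, but you then assert that ``by passing to a large enough $m$ one can arrange $D_{m}\subseteq x^{-1}D_{n}$.'' Nothing in the recursion $D_{n+1}=(D_{n})^{\star}$ (even after intersecting with $D_{n}$) forces any $D_{m}$ to land inside the specific set $x^{-1}D_{n}$; the sequence $\langle D_{m}\rangle$ is decreasing, but it is not a neighbourhood base for $p$, and an idempotent ultrafilter has uncountably many members not containing any fixed countable chain.

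This is exactly where the countability hypothesis on $S$ enters, and you never use it. The standard repair is to enumerate the countable set of pairs $\{(n,x):n\in\mathbb{N},\ x\in D_{n}\}$ as they are created and, at stage $m+1$, intersect $(D_{m})^{\star}$ with the finitely many sets $x^{-1}D_{n}$ corresponding to pairs already listed; each such set lies in $p$, so the finite intersection stays in $p$. This diagonalisation is what guarantees condition~(i). Without it, the construction as written does not produce the required sequence.
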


The following definition is essential to define $J$-sets in adequate
partial semigroups.
\begin{defn}
\cite[Definition 2.4]{key-2} Let $\left(S,*\right)$ be a partial
semigroup and let $f$ be a sequence in $S$. Then $f$ is adequate
if and only if 

(1) for each $H\in\mathcal{P}_{f}\left(\mathbb{N}\right)$, $\prod_{t\in H}f\left(t\right)$
is defined and 

(2) for each $F\in\mathcal{P}_{f}\left(S\right)$, there exists $m\in\mathbb{N}$
such that $\text{FP}\left(\left\langle f\left(t\right)\right\rangle _{t=m}^{\infty}\right)\subseteq\sigma\left(F\right)$.
\end{defn}

Now, we need to recall the definition of $J$-set for adequate partial
semigroups. First, let $\mathcal{F}$ be the set of all adequate sequences
in $S$.
\begin{defn}
\cite[Definition 3.1(b)]{key-2} Let $G$ be an adequate partial semigroup.
Then a set $A\subseteq G$ is a $J$-set if and only if for all $F\in\mathcal{P}_{f}\left(\mathcal{F}\right)$
and all $L\in\mathcal{P}_{f}\left(G\right)$, there exist $m\in\mathbb{N}$,
$a=\left(a_{1},a_{2},\ldots,a_{m+1}\right)\in G^{m+1}$ and $t=\left(t_{1},t_{2},\ldots,t_{m}\right)\in\mathcal{J}_{m}$
such that for all $f\in F$, 
\[
\left(\prod_{i=1}^{m}a_{i}\ast f\left(t_{i}\right)\right)\ast a_{m+1}\in A\cap\sigma\left(L\right).
\]
\end{defn}

Our main result is to provide an elementary proof of a generalization
of \cite[Theorem 17]{key-4} for $J$-sets. Also, we will show that
\[
\left\{ w\in S_{n}:\left(\forall\nu\in F\right)\left(\nu\left(w\right)\in D\right)\right\} 
\]
is a $J$-set, whenever $D$ is a $J$-set in $S_{0}$, where $F$
be a finite nonempty set of $S_{0}$-preserving homomorphisms from
$S_{n}$ into $S_{0}$.

\section{Our results}

The following lemma is very useful in our work.
\begin{lem}
\label{1}Let $T$ be a semigroup and $S$ be a subsemigroup of $T$.
Let $F$ be a finite nonempty set of homomorphisms from $T$ to $S$
which are the identity mapping on $S$, i.e. $\nu(s)=s$ for all $s\in S$
and $\nu\in F$. Let $D$ be a $J$-set of $S$, then $\bigcap_{\nu\in F}\nu^{-1}\left[D\right]$
is a $J$-set in $T$.
\end{lem}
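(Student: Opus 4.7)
The plan is to pull a $J$-set witness in $S$ back to $T$ using the fact that each $\nu\in F$ is a homomorphism that fixes $S$ pointwise. The key observation is that composing the sequences given in $T$ with the homomorphisms $\nu$ produces only finitely many new sequences, all landing in $S$, so the $J$-set property of $D$ in $S$ can be applied to all of them simultaneously with a single witness.

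More precisely, fix any $F'\in\mathcal{P}_{f}({}^{\mathbb{N}}T)$ witnessing what we must verify for $\bigcap_{\nu\in F}\nu^{-1}[D]$. For each $f\in F'$ and each $\nu\in F$, the composition $\nu\circ f$ is a sequence in $S$, and the collection
\[
G=\{\nu\circ f:f\in F',\ \nu\in F\}
\]
is a finite subset of ${}^{\mathbb{N}}S$ because both $F'$ and $F$ are finite. Since $D$ is a $J$-set of $S$, Definition \ref{J} applied to $G$ yields $m\in\mathbb{N}$, a tuple $a=(a_{1},\ldots,a_{m+1})\in S^{m+1}$ (crucially with coordinates in $S$, not merely in $T$) and $t=(t_{1},\ldots,t_{m})\in\mathcal{J}_{m}$ such that
\[
\Bigl(\prod_{j=1}^{m}a_{j}\cdot(\nu\circ f)(t_{j})\Bigr)\cdot a_{m+1}\in D
\qquad\text{for every }f\in F'\text{ and every }\nu\in F.
\]

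I would then claim that the same $m$, $a$, and $t$ serve as a $J$-set witness for $\bigcap_{\nu\in F}\nu^{-1}[D]$ in $T$. Given any $f\in F'$, set $x=\bigl(\prod_{j=1}^{m}a_{j}\cdot f(t_{j})\bigr)\cdot a_{m+1}\in T$. For any $\nu\in F$, using that $\nu$ is a homomorphism and that $\nu(s)=s$ for every $s\in S$ (applied to each $a_{j}$ and to $a_{m+1}$), we compute
\[
\nu(x)=\Bigl(\prod_{j=1}^{m}\nu(a_{j})\cdot\nu(f(t_{j}))\Bigr)\cdot\nu(a_{m+1})=\Bigl(\prod_{j=1}^{m}a_{j}\cdot(\nu\circ f)(t_{j})\Bigr)\cdot a_{m+1}\in D,
\]
so $x\in\nu^{-1}[D]$. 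Since $\nu\in F$ was arbitrary, $x\in\bigcap_{\nu\in F}\nu^{-1}[D]$, completing the verification.

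There is no real obstacle; the argument is essentially bookkeeping. The only point that requires care is remembering that the $J$-set definition delivers the constants $a_{j}$ inside $S$, which is what allows $\nu$ to act trivially on them and makes the homomorphism identity collapse $\nu(x)$ back to the form already placed inside $D$. If one forgot this and tried to work with $a_{j}\in T$, the argument would not close.
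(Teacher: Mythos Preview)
Your proof is correct and follows essentially the same route as the paper's: compose the given sequences with each $\nu\in F$ to obtain finitely many sequences in $S$, apply the $J$-set property of $D$ in $S$ to get a single witness $(m,a,t)$ with $a\in S^{m+1}$, and then use that $\nu$ is a homomorphism fixing $S$ pointwise to show the same $(m,a,t)$ witnesses membership in $\bigcap_{\nu\in F}\nu^{-1}[D]$. Your explicit remark that the argument hinges on the constants $a_j$ lying in $S$ is exactly the point the paper uses when it writes $\nu(a_j)=a_j$.
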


\begin{proof}
Let $E\in\mathcal{P}_{f}$$\left(^{\mathbb{N}}T\right)$ and let $G\in\mathcal{P}_{f}\left(^{\mathbb{N}}S\right)$
be defined as 
\[
G=\left\{ \nu\left(f\right):\,f\in E,\,\nu\in F\right\} .
\]
Now as $D\subseteq S$ is a $J$-set, there exists a natural number
$n$, 
\[
a=\left(a_{1},a_{2},\ldots,a_{n+1}\right)\in S^{n+1}
\]
 and $t\in\mathcal{J}_{n}$ such that, for all $\nu\in F$ and for
all $f\in E$ we have 
\[
a_{1}\nu\left(f\right)\left(t_{1}\right)a_{2}\nu\left(f\right)\left(t_{2}\right)\ldots a_{n}\nu\left(f\right)\left(t_{n}\right)a_{n+1}\in D.
\]
 Hence, for all $\nu\in F$ and for all $f\in E$ we have 
\[
\nu\left(a_{1}\right)\nu\left(f\right)\left(t_{1}\right)\nu\left(a_{2}\right)\nu\left(f\right)\left(t_{2}\right)\ldots\nu\left(a_{n}\right)\nu\left(f\right)\left(t_{n}\right)\nu\left(a_{n+1}\right)\in D,
\]
 as $\nu\left(s\right)=s$ for all $s\in S$. Also, since $\nu$ is
a homomorphism, this implies that, for all $f\in E$ we have 
\[
a_{1}f\left(t_{1}\right)a_{2}f\left(t_{2}\right)\ldots a_{n}f\left(t_{n}\right)a_{n+1}\in\nu^{-1}\left[D\right].
\]
 Thus, $\bigcap_{\nu\in F}\nu^{-1}\left[D\right]$ is a $J$-set.
\end{proof}
\begin{fact}
\label{fact} Let the homomorphism $\tau:S_{0}\cup S_{1}\rightarrow\omega$
be defined by $\tau\left(w\right)=\left|w\right|_{v}$. So, $\tau\left(w\right)=0\text{ if and only if }w\in S_{0}$.
Let $D\subseteq S_{0}$ is a $J$-set and let $\left\langle x_{n}\right\rangle _{n=1}^{\infty}$
be a sequence in $\mathbb{N}$. Let $y_{1}=0,\,y_{n+1}=x_{n}$, for
all $n\in\mathbb{N}$. Then $A=\text{FS}\left(\left\langle y_{n}\right\rangle _{n=1}^{\infty}\right)$
is an adequate partial semigroup. As, $\tau$ is a homomorphism, one
can easily check that $\tau^{-1}\left[A\right]$ is also an adequate
partial semigroup.
\end{fact}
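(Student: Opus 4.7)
The plan is to verify the two parts of the Fact separately: first that $A=\text{FS}(\langle y_n\rangle_{n=1}^\infty)$ is an adequate partial semigroup, and then that $\tau^{-1}[A]$ is too.

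For the first part, that $A$ is a partial semigroup under addition with the disjoint-support convention is already covered by the example in Section~1 (the equality $y_1 = 0$ causes no trouble, since disjointness is imposed on index sets and not on numerical values). To obtain adequacy, I would take an arbitrary $H\in\mathcal{P}_f(A)$, fix for each $a\in H$ a representation $a=\sum_{j\in H_a}y_j$ with $H_a\in\mathcal{P}_f(\mathbb{N})$, and set $N=\max\bigcup_{a\in H}H_a$. Then every element of $\text{FS}(\langle y_n\rangle_{n=N+1}^\infty)$ admits a representation whose index set is disjoint from every $H_a$, so it belongs to $\varphi(a)$ for every $a\in H$; hence $\sigma(H)\neq\emptyset$.

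For the second part, I would view $\tau^{-1}[A]$ as a partial sub-semigroup of $(S_0\cup S_1,\cdot)$ (concatenation) in which a product $u_1u_2$ is declared defined precisely when it again lies in $\tau^{-1}[A]$. Because $\tau$ is a homomorphism into $\omega$, $\tau(u_1u_2)=\tau(u_1)+\tau(u_2)$, so $u_1u_2\in\tau^{-1}[A]$ if and only if $\tau(u_1)+\tau(u_2)$ is defined in the partial semigroup $A$, and associativity transfers from $A$ through $\tau$. For adequacy, given $H\in\mathcal{P}_f(\tau^{-1}[A])$, apply the tail construction from the first part to the finite image set $\tau[H]\subseteq A$ to obtain an index $N$ lying beyond all chosen representations of elements of $\tau[H]$. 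Now pick any variable word $w$ with $|w|_v=y_{N+1}=x_N$ (for instance $v\,v\cdots v$ with $x_N$ copies of $v$). Then $\tau(w)=y_{N+1}$ has index set $\{N+1\}$, disjoint from every $H_{\tau(u)}$, so $uw\in\tau^{-1}[A]$ for every $u\in H$, witnessing $w\in\sigma(H)$.

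The main obstacle is purely notational: elements of $A$ are numerical values, yet the partial-semigroup operation depends on the chosen index-set representation rather than the value. One must therefore carry representations along through the argument rather than reasoning directly about the values in $\omega$. Once this bookkeeping is explicit, both adequacy statements reduce to the standard ``shift to a tail beyond all indices used'' construction.
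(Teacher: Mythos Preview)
The paper does not give a proof of this Fact; it is stated as self-evident (``one can easily check''). Your proposal supplies exactly the routine verification the authors omit, via the standard tail-shift argument, and it is correct. One small point worth making explicit in your write-up: when you define the partial operation on $\tau^{-1}[A]$ by declaring $u_1\cdot u_2$ defined iff $\tau(u_1)*\tau(u_2)$ is defined in $A$, associativity does indeed transfer, but only because concatenation in $S_0\cup S_1$ is already globally associative---so both $(u_1u_2)u_3$ and $u_1(u_2u_3)$ exist and agree as words, and the only question is whether they lie in $\tau^{-1}[A]$, which is settled by associativity in $A$. Your identification of the representation-versus-value ambiguity in $A$ as the main bookkeeping issue is apt and matches the spirit of the paper's Example.
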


\begin{lem}
\label{newlemma}Let $T$ be an adequate partial semigroup and $S$
be any adequate partial subsemigroup of $T$. Let $F$ be a nonempty
finite set of partial semigroup homomorphisms from $T$ to $S$, which
are $S$-preserving and the identity mapping on $S$, i.e. $\nu(s)=s$
for all $s\in S$ and $\nu\in F$. Then, for for any $J$-set $D\subseteq S$,
$\bigcap_{\nu\in F}\nu^{-1}\left[D\right]$ is a $J$-set in $T$.
\end{lem}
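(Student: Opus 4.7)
The plan is to run the argument of Lemma \ref{1} inside the partial semigroup framework: I would apply the $J$-set property of $D$ in $S$ to the family $G=\{\nu(f):\nu\in F,\,f\in E\}$ of composed sequences and then lift the resulting witness back up to $T$, using that each $\nu$ is a partial semigroup homomorphism which is the identity on $S$ and $S$-preserving. Two new technical issues appear beyond Lemma \ref{1}. First, the partial semigroup definition of a $J$-set quantifies only over adequate sequences, so each $\nu(f)$ must be checked to be an adequate sequence in $S$ rather than a plain sequence. Second, the $J$-set conclusion in $T$ requires the witness to lie in $\sigma_{T}(L)$ for a prescribed $L\in\mathcal{P}_{f}(T)$, and elements of $L$ need not belong to $S$.

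For the main construction I would fix $E\in\mathcal{P}_{f}(\mathcal{F})$ and $L\in\mathcal{P}_{f}(T)$, and first invoke the adequacy of each $f\in E$ to pick a single $N\in\mathbb{N}$ such that
\[
\text{FP}\bigl(\langle f(t)\rangle_{t=N}^{\infty}\bigr)\subseteq\sigma_{T}(L)\quad\text{for every } f\in E,
\]
replacing each $f$ by its tail $\tilde f(n)=f(n+N-1)$, which is still adequate. Setting $G=\{\nu(\tilde f):\nu\in F,\,f\in E\}\subseteq{}^{\mathbb{N}}S$, each $\nu(\tilde f)$ is an adequate sequence in $S$: condition (1) follows from $\nu$ being a partial semigroup homomorphism into $S$, so that $\prod_{t\in H}\nu(\tilde f(t))=\nu\bigl(\prod_{t\in H}\tilde f(t)\bigr)$ is defined in $S$; condition (2) transfers from $\tilde f$ by applying the identity-on-$S$ and $S$-preserving hypotheses to tail products against elements of any $F'\in\mathcal{P}_{f}(S)$. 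Applying the $J$-set property of $D$ in $S$ to $G$ and to a suitably chosen $L_{S}\in\mathcal{P}_{f}(S)$ would then produce $m\in\mathbb{N}$, $(a_{1},\ldots,a_{m+1})\in S^{m+1}$ and $t\in\mathcal{J}_{m}$ with
\[
\bigl(\textstyle\prod_{i=1}^{m}a_{i}\ast\nu(\tilde f)(t_{i})\bigr)\ast a_{m+1}\in D\cap\sigma_{S}(L_{S})
\]
for all $\nu\in F$ and $f\in E$. Setting $X=\bigl(\prod_{i=1}^{m}a_{i}\ast\tilde f(t_{i})\bigr)\ast a_{m+1}\in T$, the computation of Lemma \ref{1} carries over verbatim: pushing $\nu$ through the product via the homomorphism property and collapsing $\nu(a_{i})=a_{i}$ yields $\nu(X)\in D$ for every $\nu\in F$, hence $X\in\bigcap_{\nu\in F}\nu^{-1}[D]$.

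The main obstacle is confirming $X\in\sigma_{T}(L)$, that is, $X\ast\ell$ defined in $T$ for every $\ell\in L$. The shift by $N$ guarantees that any tail product of the $\tilde f(t_{i})$'s lies in $\sigma_{T}(L)$, so the task is to absorb the interlacing with the $a_{i}\in S$. I would handle this by choosing $L_{S}$ to record the right-end compatibility of the $a_{i}$'s with the $S$-components of $L$ and then using associativity in $T$, together with the inclusion $\sigma_{S}(L_{S})\subseteq\sigma_{T}(L_{S})$ that comes from $S$ being a partial subsemigroup of $T$, to rewrite $X\ast\ell$ in a form whose defining subproducts are all already known to exist. This reassociation step, which calls simultaneously on the adequacy of $S$ and of $T$, is the delicate part of the argument.
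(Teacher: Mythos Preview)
Your approach is exactly what the paper does: the paper's entire proof of this lemma is the single sentence ``Proceeding similar to the proof of Lemma \ref{1}, one can obtain this. So we omit the proof.'' You have simply gone further than the paper by actually writing out the partial-semigroup adaptations (adequacy of the composed sequences $\nu(\tilde f)$ and the $\sigma_{T}(L)$ requirement) that the authors leave implicit; in particular your flagging of the $\sigma_{T}(L)$ step as the delicate point is a fair assessment of where the extra work beyond Lemma \ref{1} lies.
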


\begin{proof}
Proceeding similar to the proof of Lemma \ref{1}, one can obtain
this. So we omit the proof.
\end{proof}
One can also check that $S_{0}$ is not a $J$-set in $S_{1}\cup S_{0}$.
So for any $J$-set $B$ in $S_{1}\cup S_{0}$, $B\setminus S_{0}$
is a $J$-set in $S_{1}\cup S_{0}$.
\begin{thm}
Let $\tau:T=S_{0}\cup S_{1}\rightarrow\omega$ be defined by $\tau\left(w\right)=\left|w\right|_{v}$.
Let $D\subseteq S_{0}$ is a $J$-set and let $\left\langle x_{n}\right\rangle _{n=1}^{\infty}$
be a sequence in $\mathbb{N}$. Then there exists $w\in S_{1}$ such
that $\left\{ w\left(a\right):a\in\mathbb{A}\right\} \subseteq D$
and 
\[
\tau\left(w\right)\in\text{FS}\left(\left\langle x_{n}\right\rangle _{n=1}^{\infty}\right).
\]
\end{thm}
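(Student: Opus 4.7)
The plan is to apply the $J$-set definition in $S_0$ directly to a cleverly chosen finite family of sequences, bypassing any appeal to the partial-semigroup machinery of Fact \ref{fact} or Lemma \ref{newlemma}. The key observation is that the two constraints on $w$ --- that every instance $w(a)$ lie in $D$, and that the $v$-count be a prescribed element of $\text{FS}(\langle x_n\rangle_{n=1}^\infty)$ --- can be captured simultaneously by feeding the $J$-set witness the right raw material.

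For each letter $a \in \mathbb{A}$ I would define $f_a : \mathbb{N} \to S_0$ by $f_a(n) = a^{x_n}$, the length-$x_n$ word consisting entirely of $a$. Since $\mathbb{A}$ is finite, $F := \{f_a : a \in \mathbb{A}\}$ is a finite family of sequences in $S_0$, so Definition \ref{J} applied to $D$ and $F$ produces $m \in \mathbb{N}$, constants $a_1, \ldots, a_{m+1} \in S_0$, and indices $t_1 < \cdots < t_m$ such that
\[
a_1\, a^{x_{t_1}}\, a_2\, a^{x_{t_2}} \cdots a_m\, a^{x_{t_m}}\, a_{m+1} \in D \quad \text{for every } a \in \mathbb{A}.
\]

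The variable word to produce is $w := a_1\, v^{x_{t_1}}\, a_2\, v^{x_{t_2}} \cdots a_m\, v^{x_{t_m}}\, a_{m+1}$. Checking the conclusion is routine: $|w|_v = \sum_{i=1}^m x_{t_i} \geq 1$ puts $w$ in $S_1$; the substitution $v \mapsto a$ returns exactly the product above, so $w(a) \in D$; and distinctness of the $t_i$ yields $\tau(w) \in \text{FS}(\langle x_n\rangle_{n=1}^\infty)$. The only real step is the choice of $f_a$, so I do not expect any genuine obstacle: once one notices that the blocks $a^{x_n}$ are simultaneously of length $x_n$ (so their $v$-analogues contribute $x_n$ to $\tau(w)$) and consist solely of the letter $a$ (so substitution in $v^{x_n}$ recovers them), the argument essentially writes itself.
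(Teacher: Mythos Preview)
Your argument is correct and is genuinely more direct than the paper's. The paper builds the adequate partial semigroup $\tau^{-1}[A]$ with $A=\text{FS}(\langle y_n\rangle)$ (where $y_1=0$, $y_{n+1}=x_n$), restricts the substitution maps $\bar h_a$ to this partial semigroup, and invokes Lemma~\ref{newlemma} to show that $\bigcap_{a\in\mathbb A}\bar h_a^{-1}[D]$ is a $J$-set there; it then appeals to the fact that $S_0$ is not a $J$-set in $\tau^{-1}[A]$ to extract a variable word. Your approach bypasses all of this by working inside $S_0$ from the start: the sequences $f_a(n)=a^{x_n}$ are chosen so that the single $J$-set witness in $S_0$ already encodes both the membership condition and the $v$-count condition, and the variable word $w$ is read off directly. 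What you gain is brevity and the avoidance of partial-semigroup bookkeeping; what the paper's route buys is a reusable template (the same pullback-plus-``$S_0$ is not a $J$-set'' mechanism drives Theorems~\ref{3}, \ref{16}, and \ref{17}), so its value is structural rather than local to this statement. Both arguments silently rely on $\mathbb A$ being finite, which the paper assumes throughout.
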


\begin{proof}
Let $y_{1}=0,\,y_{n+1}=x_{n}$, for all $n\in\mathbb{N}$ and let $A=\text{FS}\left(\left\langle y_{n}\right\rangle _{n=1}^{\infty}\right)$.
Let $\left\{ \bar{h}_{a}:a\in\mathbb{A}\right\} $ be a finite set
of partial semigroup homomorphisms from $\tau^{-1}\left[A\right]$
to $S_{0}$ defined by, 
\[
\bar{h}_{a}\left(w\right)=\left\{ \begin{array}{cc}
w\left(a\right) & \text{if }w\in S_{1}\cap\tau^{-1}\left[A\right]\\
w & \text{if }w\in S_{0}\cap\tau^{-1}\left[A\right]
\end{array}\right..
\]

Now, let $D\subseteq S_{0}$ be a $J$-set then $\bigcap_{a\in\mathbb{A}}\bar{h}_{a}^{-1}\left[D\right]$
is a $J$-set in $\tau^{-1}\left[A\right]$ by lemma \ref{newlemma}.
As, $S_{0}$ is not a $J$-set in $\tau^{-1}\left[A\right]$, there
exists $w\in S_{1}\cap\tau^{-1}\left[A\right]$ such that $w\in\bigcap_{a\in\mathbb{A}}\bar{h}_{a}^{-1}\left[D\right]$
. So, $\tau\left(w\right)\in A=\text{FS}\left(\left\langle y_{n}\right\rangle _{n=1}^{\infty}\right)$.
As. $w\in S_{1},\,\tau\left(w\right)\neq0$. So, $\tau\left(w\right)\in\text{FS}\left(\left\langle x_{n}\right\rangle _{n=1}^{\infty}\right)$
and $\bar{h}_{a}\left(w\right)\in D$ for all $a\in\mathbb{A}$.
\end{proof}
The following is a version of the multidimensional Hales-Jewett Theorem.
\begin{thm}
\label{3}Let $S_{n}$ be the set of all $n$-variable words and $T=S_{n}\cup S_{0}$.
If $D\subseteq S_{0}$ is a $J$-set then there exists an $n$-variable
word $w\in S_{n}$ such that, $w\left(\overrightarrow{a}\right)\in D\,\text{for all }\overrightarrow{a}\in\mathbb{A}^{n}$.
\end{thm}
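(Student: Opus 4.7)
The plan is to mimic the $n=1$ case just proved, but this time there is no need to constrain $\tau(w)$, so I can work directly in the full semigroup $T = S_n \cup S_0$ (under concatenation) rather than in a partial subsemigroup. The key observation is that for each $\vec{a} \in \mathbb{A}^n$, the map $h_{\vec{a}}\colon S_n \to S_0$ given by $h_{\vec{a}}(w)=w(\vec{a})$ (which the paper has already noted to be an $S_0$-preserving homomorphism) extends naturally to a map $\bar{h}_{\vec{a}}\colon T \to S_0$ by letting $\bar{h}_{\vec{a}}$ act as the identity on $S_0$. First I would verify that each $\bar{h}_{\vec{a}}$ is indeed a semigroup homomorphism on $T$ (checking on the four combinations of $S_n$ and $S_0$), that it is the identity on $S_0$, and hence satisfies the hypotheses of Lemma \ref{1} with $T$ and $S = S_0$.

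The collection $F = \{\bar{h}_{\vec{a}} : \vec{a} \in \mathbb{A}^n\}$ is finite, so applying Lemma \ref{1} to the $J$-set $D \subseteq S_0$ yields that
\[
B = \bigcap_{\vec{a} \in \mathbb{A}^n} \bar{h}_{\vec{a}}^{-1}[D]
\]
is a $J$-set in $T$. Any $w \in B \cap S_n$ then satisfies $w(\vec{a}) = \bar{h}_{\vec{a}}(w) \in D$ for every $\vec{a} \in \mathbb{A}^n$, which is exactly the conclusion sought.

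The only remaining point, which is also the main (small) obstacle, is to show that $B$ is not entirely contained in $S_0$, i.e.\ that $S_0$ is not itself a $J$-set in $T$. This is the higher-dimensional analogue of the remark preceding the $n=1$ theorem. The plan is to exhibit a single sequence in $T$ that witnesses the failure: take $f \in {}^{\mathbb{N}}T$ with $f(k) = v_1 v_2 \cdots v_n$ for every $k$. Then for any choice of $m$, any $(a_1,\ldots,a_{m+1}) \in T^{m+1}$, and any $t \in \mathcal{J}_m$, the product $a_1 f(t_1) a_2 f(t_2) \cdots a_m f(t_m) a_{m+1}$ contains at least one occurrence of $v_1$ (in fact, of every $v_i$), hence lies in $S_n$ and not in $S_0$. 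This rules out $S_0$ being a $J$-set in $T$, so $B \setminus S_0 \subseteq S_n$ is nonempty, and picking any $w$ in it finishes the proof.
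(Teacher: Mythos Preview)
Your proposal is correct and follows essentially the same approach as the paper: extend each substitution map $h_{\vec a}$ to $T=S_n\cup S_0$ by the identity on $S_0$, apply Lemma~\ref{1} to obtain that $\bigcap_{\vec a}h_{\vec a}^{-1}[D]$ is a $J$-set in $T$, and then use that $S_0$ is not a $J$-set in $T$ to find the desired $w\in S_n$. Your write-up is in fact more explicit than the paper's, supplying the witness sequence $f(k)=v_1v_2\cdots v_n$ for the claim that $S_0$ is not a $J$-set, which the paper leaves as ``easy to check.''
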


\begin{proof}
Let $F=\left\{ h_{\vec{a}}:\vec{a}\in\mathbb{A}^{n}\right\} $ be
a finite set of homomorphisms from $T$ to $S_{0}$ defined by, 
\[
h_{\vec{a}}\left(w\right)=\left\{ \begin{array}{cc}
w\left(\vec{a}\right) & \text{if }w\in S_{n}\\
w & \text{if }w\in S_{0}
\end{array}\right..
\]
satisfying condition of Lemma \ref{1}. So, $\bigcap_{h_{\vec{a}}\in F}h_{\vec{a}}^{-1}\left[D\right]$
is a $J$-set.

Now, it is easy to check that $S_{0}$ is not a $J$-set in $T$.
So, $S_{n}\cap\bigcap_{h_{\vec{a}}\in F}h_{\vec{a}}^{-1}\left[D\right]\neq\emptyset$
and hence the proof is done.
\end{proof}
The following is a corollary of the Theorem \ref{3}.
\begin{cor}
Let $T$ be a countably infinite semigroup and $S$ be a subsemigroup
of $T$. Let $F$ be a finite nonempty set of homomorphisms from $T$
to $S$ which are the identity mapping on $S$, i.e. $\nu(s)$=s for
all $s\in S$ and $\nu\in F$. Let $D$ be a $C$-set in $T$, then
there exists an infinite sequence $\left\langle w_{n}\right\rangle _{n=1}^{\infty}$
such that for each $H\in\mathcal{P}_{f}\left(\mathbb{N}\right)$ and
for any arbitrary function $\phi:H\rightarrow F,\,\prod_{t\in H}\phi\left(t\right)\left(w_{t}\right)\in D$,
where the product is computed in increasing order of indices.
\end{cor}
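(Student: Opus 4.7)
The plan is to mimic the standard recursive construction of an FP-set sitting inside a $C$-set, using Lemma \ref{1} to transfer the $J$-set condition from $S$ to $T$ at every stage.

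First, I apply Theorem \ref{C} to $D$ to extract a decreasing sequence $\langle D_n\rangle_{n=1}^{\infty}$ of $J$-sets contained in $D$ satisfying the shift property: for every $n\in\mathbb{N}$ and every $x\in D_n$ there exists $m\in\mathbb{N}$ with $D_m\subseteq x^{-1}D_n$. Then I construct the sequence $\langle w_n\rangle$ in $T$ recursively, maintaining the invariant that the finite set
\[
P_n=\Big\{\prod_{t\in H}\phi(t)(w_t)\;:\;\emptyset\neq H\subseteq\{1,\ldots,n\},\ \phi:H\to F\Big\}
\]
is contained in $D$, together with an auxiliary index function $j_n:P_n\to\mathbb{N}$ recording the membership $q\in D_{j_n(q)}$ for each $q\in P_n$. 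For the base case, Lemma \ref{1} tells me that $\bigcap_{\nu\in F}\nu^{-1}[D_1]$ is a $J$-set in $T$ and hence nonempty; any $w_1$ picked from it satisfies $\nu(w_1)\in D_1$ for every $\nu\in F$, and I set $j_1(\nu(w_1))=1$.

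For the inductive step, assume $w_1,\ldots,w_n$ and $j_n$ have been chosen. For each $p\in P_n$ the shift property furnishes some $m(p)$ with $D_{m(p)}\subseteq p^{-1}D_{j_n(p)}$; since $P_n$ is finite I may take $K=\max\{m(p):p\in P_n\}$. Applying Lemma \ref{1} to the $J$-set $D_K$ yields that $\bigcap_{\nu\in F}\nu^{-1}[D_K]$ is a $J$-set in $T$, from which I pick $w_{n+1}$. Then $\nu(w_{n+1})\in D_K\subseteq D$ for every $\nu\in F$, and for every $p\in P_n$ and $\nu\in F$,
\[
p\cdot\nu(w_{n+1})\in p\cdot D_K\subseteq p\cdot D_{m(p)}\subseteq D_{j_n(p)}\subseteq D.
\]
I extend $j_n$ to $j_{n+1}$ on $P_{n+1}$ by assigning $K$ to each new singleton $\nu(w_{n+1})$ and $j_n(p)$ to each new extension $p\cdot\nu(w_{n+1})$, so the invariant passes to stage $n+1$.

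The resulting sequence places every element of $\bigcup_n P_n$ inside $D$, which is exactly the required conclusion. The main delicate point is ensuring that a single $K$ can simultaneously absorb the shift conditions for every $p\in P_n$; finiteness of $P_n$ together with monotonicity of $\langle D_m\rangle$ makes this straightforward, but it is essential that the indices $j_n$ be carefully recorded so that the correct $D_{m(p)}$ can be located when building $w_{n+1}$.
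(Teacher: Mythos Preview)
Your proof is correct and follows essentially the same approach as the paper: extract the decreasing $J$-set sequence from Theorem~\ref{C}, then recursively build $\langle w_n\rangle$ using Lemma~\ref{1} at each stage to guarantee $\bigcap_{\nu\in F}\nu^{-1}[D_K]$ is nonempty. Your index-tracking function $j_n$ is more elaborate than needed---the paper simply maintains the single invariant that all partial products lie in $D_1$---but the argument is otherwise identical.
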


\begin{proof}
As $D$ is a $C$-set, by Theorem \ref{C} there is a decreasing sequence
$\left\langle D_{n}\right\rangle _{n=1}^{\infty}$ of subsets of $D$
such that 
\begin{enumerate}
\item for each $n\in\mathbb{N}$ and each $x\in D_{n}$, there exists $m\in\mathbb{N}$
with $D_{m}\subseteq x^{-1}D_{n}$ and 
\item for each $n\in\mathbb{N}$, $D_{n}$ is a $J$-set.
\end{enumerate}
Let $w_{1}\in\bigcap_{\nu\in F}\nu^{-1}\left[D_{1}\right]$ and assume
for $m\in\mathbb{N}$, we have chosen $\left\langle w_{t}\right\rangle _{t=1}^{m}$
from $\bigcap_{\nu\in F}\nu^{-1}\left[D_{1}\right]$ in such a way
that when $\emptyset\neq H\subseteq\left\{ 1,2,\ldots,m\right\} $
and $\varphi:H\rightarrow F,\,\prod_{t\in H}\phi\left(t\right)\left(w_{t}\right)\in D_{1}$. 

Let, $E=\left\{ \prod_{t\in H}\phi\left(t\right)\left(w_{t}\right):\emptyset\neq H\subseteq\left\{ 1,2,\ldots,m\right\} ,\,\phi:H\rightarrow F\right\} $.

Then, $E\subseteq D_{1}$ and let $R=\bigcap_{y\in E}y^{-1}D_{1}$.

Hence, $R\supseteq D_{m}$ for some $m\in\mathbb{N}$ and so it is
a $J$-set. Let, $w_{m+1}\in\bigcap_{\nu\in F}\nu^{-1}\left[D_{m}\right]$.

Now to verify the induction hypothesis, let $\emptyset\neq H\subseteq\left\{ 1,2,\ldots,m+1\right\} $
and let $\phi:H\rightarrow F$. If $m+1\notin H$, the conclusion
holds by the assumption, and so assume that $m+1\in H$. If $H=\left\{ m+1\right\} $,
then $w_{m+1}\in\phi(m+1)^{-1}[D_{m}]$, so assume that $\left\{ m+1\right\} \subset H$
and let $G=H\setminus\left\{ m+1\right\} $. Let $y=\prod_{t\in G}\phi\left(t\right)\left(w_{t}\right).$
Then $w_{m+1}\in\phi(m+1)^{-1}[y^{-1}D_{m}]$ and so $\prod_{t\in H}\phi\left(t\right)\left(w_{t}\right)=y\phi\left(m+1\right)\left(w_{m+1}\right)\in D_{m}\subseteq D_{1}.$

This completes the proof.
\end{proof}
\begin{thm}
\label{16}Let $k,n\in\mathbb{N}$ with $k<n$ and let $T$ be the
set of words over $\left\{ v_{1},v_{2},\ldots,v_{k}\right\} $ in
which $v_{i}$ occurs for each $i\in\left\{ 1,2,\ldots,k\right\} $.
Given $w\in S_{n}$, let $\tau\left(w\right)$ be obtained from $w$
by deleting all occurrences of elements of $\mathbb{A}$ as well as
all occurrences of $v_{i}$ for $k<i\leq n$. Let $\left\langle y_{t}\right\rangle _{t=1}^{\infty}$
be a sequence in $T$ and let $D\subseteq S_{0}$ be a $J$-set of
$S_{0}$ then there exists $w\in S_{n}$ such that $w\left(\overrightarrow{a}\right)\in D$
for all $\overrightarrow{a}\in\mathbb{A}^{n}$ and $\tau\left(w\right)\in\text{FP}\left(\left\langle y_{t}\right\rangle _{t=1}^{\infty}\right)$.
\end{thm}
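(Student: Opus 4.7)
The plan is to mimic the proof of the one-variable theorem immediately above, upgrading two things at once: the target of $\tau$ moves from $(\omega,+)$ to word concatenation on the free monoid over $\{v_1,\dots,v_k\}$ (so FS is replaced by FP), and the single substitution map is replaced by the $h_{\vec{a}}$-family already used in Theorem~\ref{3}. As the first step I would pad the given sequence by setting $y'_1 := \theta$ (the empty word) and $y'_{t+1} := y_t$ for $t \geq 1$, and form
\[
A := \mathrm{FP}(\langle y'_t \rangle_{t=1}^{\infty}),
\]
equipped with the partial concatenation in which $(\prod_{t\in H_1} y'_t)(\prod_{t\in H_2} y'_t)$ is declared defined when $H_1 \cap H_2 = \emptyset$ and $\max H_1 < \min H_2$; this yields an adequate partial semigroup, in direct analogy with Fact~\ref{fact}. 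Since $\tau$ is a concatenation homomorphism from $S_n \cup S_0$ into the free monoid on $\{v_1,\dots,v_k\}$ and sends $S_0$ to $\{\theta\} \subseteq A$, the preimage $\tau^{-1}[A]$ is an adequate partial subsemigroup of $S_n \cup S_0$ that contains all of $S_0$.

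Next, for each $\vec{a} \in \mathbb{A}^n$ I would define the partial semigroup homomorphism $\bar{h}_{\vec{a}}: \tau^{-1}[A] \to S_0$ by $\bar{h}_{\vec{a}}(w) = w(\vec{a})$ if $w \in S_n \cap \tau^{-1}[A]$ and $\bar{h}_{\vec{a}}(w) = w$ if $w \in S_0$. A routine check shows this map is $S_0$-preserving and restricts to the identity on $S_0$. Applying Lemma~\ref{newlemma} with the roles $T = \tau^{-1}[A]$, $S = S_0$, and $F = \{\bar h_{\vec a} : \vec{a} \in \mathbb{A}^n\}$, the set
\[
B := \bigcap_{\vec{a} \in \mathbb{A}^n} \bar{h}_{\vec{a}}^{-1}[D]
\]
is a $J$-set in $\tau^{-1}[A]$.

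Finally, I would observe -- by the same argument used in the proof of Theorem~\ref{3} and in the preceding one-variable theorem, where one tests the definition of $J$-set against any adequate sequence supported in $S_n \cap \tau^{-1}[A]$ -- that $S_0$ is not itself a $J$-set in $\tau^{-1}[A]$. Consequently $B$ must meet $S_n$. For any $w \in B \cap S_n$, the definition of $B$ gives $w(\vec{a}) \in D$ for every $\vec{a} \in \mathbb{A}^n$, and $w \in \tau^{-1}[A]$ gives $\tau(w) \in A$. Because $w$ is an $n$-variable word and $k \leq n$, every $v_i$ with $i \leq k$ occurs in $w$ and hence in $\tau(w)$; so $\tau(w) \neq \theta$ and therefore $\tau(w) \in \mathrm{FP}(\langle y_t \rangle_{t=1}^{\infty})$, completing the proof.

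The main technical obstacle, as I see it, is the bookkeeping needed to certify that $A$ and $\tau^{-1}[A]$ really are adequate partial semigroups in the non-commutative word setting, and that each $\bar h_{\vec a}$ genuinely respects the partial operation (so that Lemma~\ref{newlemma} may legitimately be invoked). Each of these checks is essentially a non-commutative rerun of Fact~\ref{fact}; once they are in place, the combinatorial content of the theorem reduces to a direct transcription of the preceding one-variable argument into the multidimensional setting.
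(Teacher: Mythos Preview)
Your argument is correct and follows essentially the same route as the paper: extend $\tau$ so that $S_0$ maps to $\theta$, pull back $\mathrm{FP}(\langle y_t\rangle)\cup\{\theta\}$ to obtain an adequate partial semigroup containing $S_0$, apply Lemma~\ref{newlemma} with the substitution maps $h_{\vec a}$, and finish by noting that $S_0$ is not a $J$-set there. Your version is in fact a bit more explicit than the paper's (you spell out the padding $y'_1=\theta$ in analogy with Fact~\ref{fact} and the reason $\tau(w)\neq\theta$), but the strategy is the same.
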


\begin{proof}
Let $T^{*}=T\cup\left\{ \theta\right\} $, and let $\tau^{*}:S_{n}\cup S_{0}\rightarrow T^{*}$
defined by
\[
\tau^{*}\left(w\right)=\left\{ \begin{array}{c}
\tau\left(w\right)\text{ if }w\in S_{n}\\
\theta\,\,\,\,\,\text{ if }w\in S_{0}
\end{array}\right..
\]
Clearly $\left(\tau^{*}\right)^{-1}\left[\text{FP}\left(\left\langle y_{t}\right\rangle _{t=1}^{\infty}\right)\cup\theta\right]$
is a partial semigroup in $S_{n}$. We know that $F=\left\{ h_{\vec{a}}:\vec{a}\in\mathbb{A}^{n}\right\} $
be a nonempty finite set of partial semigroup $S_{0}$-preserving
homomorphisms on $S_{0}$. Note that $\left(\tau^{*}\right)^{-1}\left[\theta\right]=S_{0}$.
Let $D$ be a $J$-set on $S_{0}$. Then, $\bigcap_{\vec{a}\in\mathbb{A}^{n}}h_{\vec{a}}^{-1}\left[D\right]$
is a $J$-set in $S_{n}\cup S_{0}$. 

Since, $S_{0}$ is not a $J$-set
in $\left(\tau^{*}\right)^{-1}\left[\text{FP}\left\langle y_{t}\right\rangle _{t=1}^{\infty}\cup\theta\right]$,
we have  $\bigcap_{\vec{a}\in\mathbb{A}^{n}}h_{\vec{a}}^{-1}\left[D\right]\setminus S_{0}$
is a $J$-set in  $\left(\tau^{*}\right)^{-1}\left[\text{FP}\left\langle y_{t}\right\rangle _{t=1}^{\infty}\cup\theta\right]$.
So, there exists $w\in S_{n}\cap\bigcap_{\vec{a}\in\mathbb{A}^{n}}h_{\vec{a}}^{-1}\left[D\right]$
such that $\tau^{*}\left(w\right)=\tau\left(w\right)\in\text{FP}\left(\left\langle y_{t}\right\rangle _{t=1}^{\infty}\right)$
( since, $w\in S_{n}$). This completes the theorem.
\end{proof}
In the next theorem we prove \cite[Theorem 17]{key-4} for $J$-sets.
As in \cite[Theorem 17]{key-4}, in Theorem \ref{17}, the semigroup
$T$ and the matrix $M$ satisfy all the appropriate hypotheses for
matrix multiplication to make sense and be distributive over addition.
\begin{thm}
\label{17}Let $\left(T,+\right)$ be a commutative semigroup with
identity $0$. Let $k,m,n\in\mathbb{N}$, and $M$ be a $k\times m$
matrix. The entries of $M$ come from $\omega$. For $i\in\left\{ 1,2,\ldots,m\right\} $,
let $\tau_{i}$ be an $S_{0}$- independent homomorphism from $S_{n}$
to $T$. Define a function $\psi$ on $S_{n}$ by
\[
\psi\left(w\right)=\left(\begin{array}{c}
\tau_{1}\left(w\right)\\
\tau_{2}\left(w\right)\\
\vdots\\
\tau_{m}\left(w\right)
\end{array}\right),
\]
 with the property that for any collection of IP-sets $\left\{ C_{i}:i\in\left\{ 1,2,\ldots,k\right\} \right\} $
in $T$, there exists $a\in S_{n}$ such that $M\psi\left(a\right)\in\times_{i=1}^{k}C_{i}$.
Let $F$ be a finite nonempty set of $S_{0}$-preserving homomorphisms
from $S_{n}$ to $S_{0}$ and $D\subseteq S_{0}$ is a $J$-set in
$S_{0}$. Let $B_{i}=\text{FS}\left(\left\langle x_{n}^{\left(i\right)}\right\rangle _{n=1}^{\infty}\right)$
for $1\leq i\leq k$ be $k$ IP sets in $T$, then, for each $i\in\left\{ 1,2,\ldots,k\right\} $,
there exists $w\in S_{n}$ such that $\nu\left(w\right)\in D$ for
every $\nu\in F$ and $M\psi\left(w\right)\in\times_{i=1}^{k}B_{i}$.
\end{thm}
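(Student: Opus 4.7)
The plan is to adapt the partial-semigroup strategy of Theorem~\ref{16} to the multi-IP-set setting. The argument has three ingredients: iteratively use the IP-hitting hypothesis on $M\psi$ to produce a rich witness sequence $\langle a_t\rangle$; package that data into an adequate partial semigroup $P$; then combine Lemma~\ref{newlemma} with the failure of $S_0$ to be a $J$-set in $P$. Concretely, I would construct $\langle a_t\rangle_{t=1}^{\infty}$ in $S_n$ inductively so that $M\psi\!\left(\prod_{t\in H}a_t\right)\in\times_{i=1}^{k}B_i$ for every finite $H\subseteq\mathbb{N}$. At stage $s$, having chosen $a_1,\ldots,a_{s-1}$, let $N_s$ be one larger than every index $l$ appearing in any FS-representation of the coordinates $M\psi(a_t)_i$ for $t<s$ and $i\le k$; the sub-IP-sets $C_i^{(s)}=\text{FS}(\langle x_l^{(i)}\rangle_{l\ge N_s})$ together with the hypothesis supply $a_s\in S_n$ with $M\psi(a_s)\in\times_{i}C_i^{(s)}$. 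Disjointness of supports then gives $M\psi(\prod_{t\in H}a_t)=\sum_{t\in H}M\psi(a_t)\in\times_{i=1}^{k}B_i$.

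Next, I would extend $M\psi$ to $\Psi^{*}:S_n\cup S_0\to T^k$ by setting $\Psi^{*}(w)=0$ for $w\in S_0$; this is a homomorphism because the $\tau_i$ are $S_0$-independent. Let $P=(\Psi^{*})^{-1}\!\left[(\times_{i=1}^{k}B_i)\cup\{0\}\right]$, equipped with the partial operation inherited from the disjoint-support partial operation on $\times_{i=1}^{k}B_i$. Then $P$ is an adequate partial semigroup containing $S_0$, and $\langle a_t\rangle$ is an adequate sequence in $P$: for any finite $H\subseteq P$, one can take $t$ large enough that $a_t$ has support disjoint from every element of $H$, so all relevant products land in $P$. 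Each $\nu\in F$ extends to a partial-semigroup homomorphism $\tilde\nu:P\to S_0$ which is the identity on $S_0$ (the mixed cases follow from $\nu$ being $S_0$-preserving), so Lemma~\ref{newlemma} gives that $E:=\bigcap_{\nu\in F}\tilde\nu^{-1}[D]$ is a $J$-set in $P$.

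Finally, I would observe that for any $m$, $(q_1,\ldots,q_{m+1})\in P^{m+1}$ and $(t_1,\ldots,t_m)\in\mathcal{J}_m$, the product $q_1 a_{t_1}\cdots q_m a_{t_m} q_{m+1}$ contains some $a_{t_j}\in S_n$ and hence lies in $S_n$, not in $S_0$. Taking the family $\{\langle a_t\rangle\}$ in the $J$-set definition therefore shows that $S_0$ is not a $J$-set in $P$. Since $E$ is a $J$-set in $P$, this forces $E\not\subseteq S_0$, and any $w\in E\cap(P\setminus S_0)$ lies in $S_n$, satisfies $\nu(w)=\tilde\nu(w)\in D$ for every $\nu\in F$, and has $M\psi(w)=\Psi^{*}(w)\in\times_{i=1}^{k}B_i$ because $\Psi^{*}(w)\ne 0$. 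The main technical obstacle is verifying rigorously that $P$ is an adequate partial semigroup with a well-defined associative partial operation: since $T$ is only a commutative semigroup with identity, representations in $\times B_i$ as sums need not be unique, so some care is required in fixing the index data that governs compatibility of products. The disjoint-support construction of $\langle a_t\rangle$ is precisely what makes the partial operation unambiguous on the sub-partial-semigroup that is relevant to the $J$-set argument.
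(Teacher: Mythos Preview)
Your approach is essentially the paper's, carried out with more care: both build the homomorphism $\Psi^{*}=M\psi$ extended by $0$ on $S_0$, pass to the preimage of an FS-structure as an adequate partial semigroup, invoke Lemma~\ref{newlemma} to get that $\bigcap_{\nu}\nu^{-1}[D]$ is a $J$-set there, and then argue that a suitable ``degenerate'' piece is not a $J$-set. The paper is terser (it cites \cite{key-4} and Theorem~\ref{16}) and uses the larger target $\times_{i}(B_i\cup\{0\})$ together with the coordinate slices $D_j=\{\vec a: a_j=0\}$, whereas you use the smaller target $(\times_i B_i)\cup\{0\}$; your explicit construction of the witness sequence $\langle a_t\rangle$ via iterated sub-IP-sets is exactly the content the paper suppresses.

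There is, however, a genuine gap in your last paragraph. From $w\in E\cap(P\setminus S_0)$ you conclude $\Psi^{*}(w)\neq 0$, but this does not follow: an element $w\in S_n$ can perfectly well satisfy $M\psi(w)=0$ (e.g.\ if each $\tau_i(w)=0$, or if $M$ annihilates $\psi(w)$), and such $w$ lies in $P\setminus S_0$ with $\Psi^{*}(w)=0\notin\times_i B_i$. What you actually need is that $(\Psi^{*})^{-1}[\{0\}]$ is not a $J$-set in $P$, not merely that $S_0$ is not. Fortunately your own witness sequence already gives this: whenever the partial product $q_1 a_{t_1}\cdots q_m a_{t_m}q_{m+1}$ is defined in $P$, its $\Psi^{*}$-value is a disjoint-support sum including $\Psi^{*}(a_{t_1})\in\times_i B_i$, hence lies in $\times_i B_i$ and is nonzero. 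So replace ``$S_0$ is not a $J$-set in $P$'' by ``$(\Psi^{*})^{-1}[\{0\}]$ is not a $J$-set in $P$'' and choose $w\in E\setminus(\Psi^{*})^{-1}[\{0\}]$; then $\Psi^{*}(w)\in\times_i B_i$ and $w\in S_n$ follow immediately. This is exactly the role the sets $D_j$ play in the paper's version.
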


\begin{proof}
Let $\phi:S_{n}\cup S_{0}\rightarrow\times_{i=1}^{k}\left(B_{i}\cup\left\{ 0\right\} \right)=B$,
where $\phi\left(w\right)=M\psi\left(w\right)$. Then, as in \cite[Theorem 17]{key-4},
$\phi$ is a homomorphism. Since for $i\in\left\{ 1,2,\ldots,m\right\} $,
$\tau_{i}$ is $S_{0}$-independent, $\phi^{-1}\left[0\right]\supseteq S_{0}$.
Now, clearly, $\phi^{-1}\left(B\right)$ is a partial semigroup. Let,
$D_{j}=\left\{ \vec{a}\in B:a_{i}=0\right\} $, i.e; $D_{j}$ contains
all elements of $B$ which have $i^{\text{th}}$ coordinate $0$.
Then, one can easily check that $\phi^{-1}\left[D_{j}\right]$ is
not a $J$-set in $\phi^{-1}\left[B\right]$, as $S_{0}$ is not a
$J$-set in $S_{n}\cup S_{0}$. So, as in proof of Theorem \ref{16},
there exists $w\in S_{n}$ such that $\nu\left(w\right)\in D$ and
$\phi\left(w\right)\in\times_{i=1}^{k}B_{i}$ for all $\nu\in F$.

This completes the proof.
\end{proof}
As a consequence of Theorem \ref{17}, whenever $D\subseteq S_{0}$
is a $J$-set in $S_{0}$ and $n\in\mathbb{N}$, there exists $w\in S_{n}$
such that $\left\{ w\left(\vec{x}\right):\vec{x}\in\mathbb{A}^{n}\right\} \subseteq D$.
\begin{thm}
Let $n\in\mathbb{N}$ and let $D\subseteq S_{0}$ be a $J$-set in
$S_{0}$. Let $F$ be a finite nonempty set of $S_{0}$-preserving
homomorphisms from $S_{n}$ into $S_{0}$. Then 
\[
\left\{ w\in S_{n}:\left(\forall\nu\in F\right)\left(\nu\left(w\right)\in D\right)\right\} 
\]
 is a $J$-set in $S_{n}$.
\end{thm}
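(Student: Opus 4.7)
The plan is to mirror the strategy used in the proofs of Theorems~\ref{3} and~\ref{16}: work in the ambient semigroup $T = S_n \cup S_0$, produce a large $J$-set there via Lemma~\ref{1}, remove the ``fixed-word'' part by partition regularity, and finally push what remains down to the ideal $S_n$. For each $\nu \in F$, I extend $\nu$ to $\bar\nu : T \to S_0$ by setting $\bar\nu(u) = u$ for $u \in S_0$. A short case-check, which uses precisely that $\nu$ is $S_0$-preserving to handle the mixed products $\bar\nu(uw) = u\nu(w)$ and $\bar\nu(wu) = \nu(w)u$ for $u \in S_0$ and $w \in S_n$, shows that $\bar\nu$ is a semigroup homomorphism $T \to S_0$ that restricts to the identity on $S_0$. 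Lemma~\ref{1} then applies to $T$, the subsemigroup $S_0$, and the family $\{\bar\nu : \nu \in F\}$, yielding that $\bigcap_{\nu \in F}\bar\nu^{-1}[D]$ is a $J$-set in $T$.

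Since each $\bar\nu$ restricts to the identity on $S_0$, this intersection decomposes as $A \cup D$, where
\[
A = \{w \in S_n : (\forall \nu \in F)\, \nu(w) \in D\}
\]
is the set whose $J$-set property is being claimed. I next observe that $S_0$ is not a $J$-set in $T$: fixing any $w_0 \in S_n$, the constant sequence $f \equiv w_0$ witnesses this, because any product $a_1 f(t_1) \cdots f(t_m) a_{m+1}$ contains the factor $w_0$ and is therefore forced into the two-sided ideal $S_n$. Consequently $D \subseteq S_0$ is also not a $J$-set in $T$, so by the partition regularity of the family of $J$-sets---the same fact used implicitly in the proof of Theorem~\ref{16}---the set $A$ must itself be a $J$-set in $T$.

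The step I expect to be the main obstacle is transferring this property from $T$ down to the subsemigroup $S_n$, since the witnesses $a_i$ supplied by the $J$-set property in $T$ can a priori lie in $S_0$, whereas the definition of a $J$-set in $S_n$ requires witnesses inside $S_n$. My plan is a padding argument. Given any $E \in \mathcal{P}_f({}^{\mathbb{N}}S_n)$, I fix some $w_0 \in S_n$ once and for all and apply the $J$-set property of $A$ in $T$ to the family $\tilde E = \{\tilde f : f \in E\}$ defined by $\tilde f(t) = w_0 f(t) w_0$, whose entries still lie in $S_n$. If this returns $m$, witnesses $a_1, \dots, a_{m+1} \in T$, and indices $t_1 < \cdots < t_m$, I set $a_1' = a_1 w_0$, $a_{m+1}' = w_0 a_{m+1}$, and $a_i' = w_0 a_i w_0$ for $1 < i < m+1$. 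Each $a_i'$ then lies in the ideal $S_n$, and by associativity
\[
a_1' f(t_1) a_2' f(t_2) \cdots a_m' f(t_m) a_{m+1}' \;=\; a_1 \tilde f(t_1) a_2 \tilde f(t_2) \cdots a_m \tilde f(t_m) a_{m+1} \;\in\; A,
\]
which provides the required witnesses for $A$ inside $S_n$ and completes the plan.
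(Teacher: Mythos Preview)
Your proposal is correct and follows essentially the same route as the paper's proof: extend each $\nu$ to the identity on $S_0$, apply Lemma~\ref{1} in $T = S_n \cup S_0$, discard the $S_0$-part using that $S_0$ is not a $J$-set in $T$, and descend to $S_n$. The only real difference is that you spell out the descent from $T$ to $S_n$ via an explicit padding argument, whereas the paper asserts this passage in a single sentence without justification.
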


\begin{proof}
Let $T=S_{n}\cup S_{0}$ and extend each $\nu\in F$ to all of $T$
by defining $\nu$ to be the identity mapping on $S_{0}$, i.e; $\nu\left(s\right)=s$
for all $s\in S_{0}$. Since, $D\subseteq S_{0}$ is $J$-set, $\bigcap_{\nu\in F}\nu^{-1}\left[D\right]$
is a $J$-set in $S_{n}\cup S_{0}$. Since, $S_{0}$ is not a $J$-set
in $S_{n}\cup S_{0}$, we have $\bigcap_{\nu\in F}\nu^{-1}\left[D\right]\setminus S_{0}$
is a $J$-set in $S_{n}$. So, $\left\{ w\in S_{n}:w\in\bigcap_{\nu\in F}\nu^{-1}\left[D\right]\right\} $
is a $J$-set in $S_{n}$. Thus, $\left\{ w\in S_{n}:\left(\forall\nu\in F\right)\left(\nu\left(w\right)\in D\right)\right\} $
is a $J$-set in $S_{n}$.
\end{proof}
$\vspace{.1in}$

\textbf{Acknowledgment:} The second author of the paper acknowledges
the grant UGC-NET SRF fellowship with id no. 421333 of CSIR-UGC NET
December 2016. We would like to thank Prof. Dibyendu De for his helpful
comments on this paper. We also acknowledge the helpful comments of
the referees to impove the previous draft of the article.

$\vspace{.1in}$

\end{document}